\begin{document}
\newcommand{\dyle}{\displaystyle}
\newcommand{\R}{{\mathbb{R}}}
\newcommand{\Hi}{{\mathbb H}}
\newcommand{\Ss}{{\mathbb S}}
\newcommand{\N}{{\mathbb N}}
\newcommand{\Rn}{{\mathbb{R}^n}}
\newcommand{\F}{{\mathcal F}}
\newcommand{\ieq}{\begin{equation}}
\newcommand{\eeq}{\end{equation}}
\newcommand{\ieqa}{\begin{eqnarray}}
\newcommand{\eeqa}{\end{eqnarray}}
\newcommand{\ieqas}{\begin{eqnarray*}}
\newcommand{\eeqas}{\end{eqnarray*}}
\newcommand{\f}{\hat{f}}
\newcommand{\Bo}{\put(260,0){\rule{2mm}{2mm}}\\}
\newcommand{\1}{\mathlarger{\mathlarger{\mathbbm{1}}}}


\theoremstyle{plain}
\newtheorem{theorem}{Theorem} [section]
\newtheorem{corollary}[theorem]{Corollary}
\newtheorem{lemma}[theorem]{Lemma}
\newtheorem{proposition}[theorem]{Proposition}
\def\neweq#1{\begin{equation}\label{#1}}
\def\endeq{\end{equation}}
\def\eq#1{(\ref{#1})}


\theoremstyle{definition}
\newtheorem{definition}[theorem]{Definition}
\newtheorem{remark}[theorem]{Remark}
\numberwithin{figure}{section}

\title[Fluctuation-dissipation and stochastic calculus]{Fluctuation-dissipation relation,
Maxwell-Boltzmann statistics, equipartition theorem, and stochastic calculus}

\author[C. Escudero]{Carlos Escudero}
\address{}
\email{}

\keywords{Forward stochastic differential equations, backward stochastic differential equations, Malliavin calculus,
uniqueness of solution, multiplicity of solutions, It\^o vs Stratonovich dilemma.
\\ \indent 2010 {\it MSC: 82C03, 60H05, 60H07, 60H10, 60J60, 82C31}}

\date{\today}

\begin{abstract}
We derive the fluctuation-dissipation relation and explore its connection with the equipartition theorem and
Maxwell-Boltzmann statistics through the use
of different stochastic analytical techniques. Our first approach is the theory of backward stochastic differential equations,
which arises naturally in this context, and facilitates the understanding of the interplay between these classical results
of statistical mechanics. The second approach consists in deriving forward stochastic differential equations for the energy of
an electric system according to both It\^o and Stratonovich stochastic calculus rules. While the It\^o equation possesses a unique solution,
which is the physically relevant one, the Stratonovich equation admits this solution along with infinitely many more, none of which has
a physical nature. Despite of this fact, some, but not all of them, obey the fluctuation-dissipation relation.
\end{abstract}
\maketitle

\section{Introduction}

The classical papers by Einstein~\cite{einstein} and Langevin~\cite{langevin}
anticipated two significant tools that have been extensively employed in the study
of stochastic processes relevant in the field of statistical physics along the twentieth century. These tools are, of course, partial
differential equation methods~\cite{einstein} and stochastic analytical methods~\cite{langevin,uo}. In a sense, partial differential
equation methods have been more popular in the study of some aspects of statistical physics~\cite{risken}. Nevertheless, stochastic methods yield
additional insights into statistical mechanical systems that could be, at times, even more enlightening~\cite{gillespie}.
The aim of this work is to approach the classical fluctuation-dissipation relation with stochastic methods that yield additional information to
the classical ones~\cite{gillespie,langevin}.

In section~\ref{bsdes} we approach the fluctuation-dissipation relation through
the Langevin model for the random dispersal of a Brownian particle.
According to this model the position $X_t$ of such a particle obeys Newton second law~\cite{fgl}
\begin{eqnarray}\label{nsl}
m \, \frac{d^2 X_t}{dt^2} &=& -\gamma \, \frac{d X_t}{dt} + \sigma \, \xi_t, \\ \nonumber
\left. X_t \right|_{t=0} &=& X_0, \\ \nonumber
\left. \frac{d X_t}{dt} \right|_{t=0} &=& V_0,
\end{eqnarray}
where $\xi_t$ is Gaussian white noise and $m, \gamma, \sigma >0$ denote respectively the mass of the particle,
the viscosity of the medium in which the particle is immersed, and the strength of the thermal fluctuations.
The random variables $X_0$ and $V_0$ denote, obviously, the initial position and velocity of the particle.
The traditional approach focuses on analyzing the long time behavior of
the forward stochastic differential equation for the velocity of this particle
\begin{equation}\label{velocity}
V_t = \frac{d X_t}{dt};
\end{equation}
and the fluctuation-dissipation relation arises as a consequence of Maxwell-Boltzmann statistics
in this limit. Therefore it is natural to study this problem posed backwards in time, with these
statistics imposed on the final condition. This is the program carried out in section~\ref{bsdes}.

The fluctuation-dissipation relation does not only manifest itself in the random dispersal of a particle subjected to
a heat bath and embedded in a viscous medium. Another paradigmatic example of this relation arises in electric circuits,
in which the stochasticity enters through the fluctuations in the electric current known as Johnson noise~\cite{gillespie,johnson,nyquist}.
In this system, the fluctuation-dissipation relation appears as a consequence of the equipartition of energy in the long time limit.
While building a theory \emph{\`a la Langevin} for this phenomenon requires considering a stochastic differential equation for the
electric current~\cite{gillespie}, it is natural to directly consider the equation for the energy instead.
This is the approach developed in section~\ref{fsdes}, where we explore the technical difficulties that such a method implies. In particular,
the equation for the current presents additive noise, and the equation for the energy shows multiplicative noise, and therefore it should be
interpreted. While the It\^o interpretation yields no problems, the Stratonovich interpretation is affected by an infinite multiplicity
of solutions, of which only one has physical nature. We analyze how some of these spurious solutions still obey the fluctuation-dissipation
relation, while others do not. The link of the results in this section
with one of the most classical versions of the It\^o vs Stratonovich dilemma is shown in the following section~\ref{ideal}.

Finally, in section~\ref{conclusions}, we draw our main conclusions. On one hand, we discuss the possible interest of the present
stochastic methods to address these physical problems. On the other hand, we comment on how our results could fit in the physics literature.
All in all, we hope we can offer new viewpoints on such a classical and important result as the fluctuation-dissipation relation.

\section{A backward stochastic differential equations approach to the fluctuation-dissipation relation}
\label{bsdes}

Let $(\Omega,\mathcal{F},\{\mathcal{F}_t\}_{t \ge 0},\mathbb{P})$ be a filtered probability space
completed with the $\mathbb{P}-$null sets in which a Wiener Process $\{W_t\}_{t \ge 0}$ is defined; moreover assume
$\mathcal{F}_t \supset \sigma(\{W_s, 0 \le s \le t\})$.
Throughout this section we consider the random dispersal of a Brownian particle that will be assumed to be of
unit mass, that is $m=1$, without loss of generality.
We consider the backward stochastic differential equation
\begin{eqnarray}\label{bl}
dV_t &=& -\gamma V_t dt + \sigma_t dW_t, \\ \nonumber
V_T &=& F,
\end{eqnarray}
where $0 \le t \le T$ and $F$ is $\mathcal{F}_T-$measurable.
Note that this is just a final value problem for the velocity of the particle, defined by equation~\eqref{velocity},
which position is described by equation~\eqref{nsl},
but with the amplitude of the fluctuations promoted from constant to stochastic process.
In other words, our plan is to solve the Langevin equation backwards in time for both the velocity $V_t$ and the diffusion $\sigma_t$.
Solving a single backward stochastic differential equation for two stochastic processes is key for keeping the adaptability of the solutions~\cite{pp}.

Furthermore assume $F \in L^2(\Omega)$.
Following the theory developed in~\cite{pp} we know equation~\eqref{bl} possesses a unique solution;
moreover the stochastic process $V_t$, that is, the velocity of the Brownian particle,
admits the following explicit representation:
$$
V_t= e^{\gamma (T-t)} \mathbb{E} [F|\mathcal{F}_t].
$$
Clearly, the fluctuation-dissipation relation can only be established through the explicit computation
of the diffusion $\sigma_t$. We complete this calculation using Malliavin calculus and proceed in two steps;
first we make the change of variables
$$
U_t= e^{\gamma (t-T)} V_t,
$$
and therefore
\begin{eqnarray}\nonumber
dU_t &=& \gamma e^{\gamma (t-T)} V_t + e^{\gamma (t-T)} dV_t \\ \nonumber
&=& \gamma e^{\gamma (t-T)} V_t - \gamma e^{\gamma (t-T)} V_t + e^{\gamma (t-T)} \sigma_t dW_t,
\end{eqnarray}
thus
$$
dU_t = e^{\gamma (t-T)} \sigma_t dW_t,
$$
or equivalently
\begin{eqnarray}\nonumber
&& U_t = U_0 + \int_{0}^{t} e^{\gamma (s-T)} \sigma_s dW_s \\ \nonumber
&\Longrightarrow& e^{\gamma (t-T)} V_t = e^{-\gamma T} V_0 + \int_{0}^{t} e^{\gamma (s-T)} \sigma_s dW_s \\ \nonumber
&\Longrightarrow& V_t = e^{-\gamma t} V_0 + \int_{0}^{t} e^{\gamma (s-t)} \sigma_s dW_s \\ \nonumber
&\Longrightarrow& V_t = \mathbb{E}[V_t] + \int_{0}^{t} e^{\gamma (s-t)} \sigma_s dW_s \\ \nonumber
&\Longrightarrow& V_T = \mathbb{E}[V_T] + \int_{0}^{T} e^{\gamma (s-T)} \sigma_s dW_s \\ \nonumber
&\Longrightarrow& F = \mathbb{E}[F] + \int_{0}^{T} e^{\gamma (s-T)} \sigma_s dW_s,
\end{eqnarray}
by the zero mean property of the It\^o integral. Our second step is to use the generalization
of the Clark-Ocone formula for $L^2(\Omega)$ random variables~\cite{noep} to find
$$
e^{\gamma (s-T)} \sigma_s = \mathbb{E}[D_s F | \mathcal{F}_s],
$$
where $D_s F$ is the Malliavin derivative of $F$, so we may conclude
$$
\sigma_t = e^{\gamma (T-t)} \mathbb{E}[D_t F | \mathcal{F}_t].
$$
Summarizing, we have found the unique solution pair to the backward stochastic differential equation
\begin{subequations}
\begin{eqnarray}\label{v1}
V_t &=& e^{\gamma (T-t)} \mathbb{E} [F|\mathcal{F}_t], \\ \label{s1}
\sigma_t &=& e^{\gamma (T-t)} \mathbb{E}[D_t F | \mathcal{F}_t].
\end{eqnarray}
\end{subequations}
To recover the classical results we need to assume that the velocity is Maxwell-Boltzmann distributed at the terminal time, that is
$$
F \sim \mathcal{N}(0,k_B \tau),
$$
where $k_B$ is the Boltzmann constant and $\tau>0$ is the absolute temperature.
Furthermore assume the representation of this random variable by means of an It\^o integral
$$
F= \sqrt{k_B \tau} \frac{\int_{0}^{T} \psi_s dW_s}{\left(\int_{0}^{T} \psi_s^2 ds \right)^{1/2}},
$$
where $\psi_s \in L^2(0,T)$ is an arbitrary deterministic function,
to find
\begin{subequations}
\begin{eqnarray}\label{v2}
V_t &=& \sqrt{k_B \tau} e^{\gamma (T-t)} \frac{\int_{0}^{t} \psi_s dW_s}{\left(\int_{0}^{T} \psi_s^2 ds \right)^{1/2}}, \\ \label{s2}
\sigma_t &=& \sqrt{k_B \tau} e^{\gamma (T-t)} \frac{\psi_t}{\left(\int_{0}^{T} \psi_s^2 ds \right)^{1/2}}.
\end{eqnarray}
\end{subequations}
As these formulas reveal, the classical fluctuation-dissipation relation is not recovered
by simply assuming an approach to equilibrium driven by linear viscous damping and
Maxwell-Boltzmann statistics once there.
To recover it let us moreover assume the Ornstein-Uhlenbeck form
$$
\psi_t \propto e^{\gamma (t-T)}
$$
to obtain
\begin{subequations}
\begin{eqnarray}\label{v3}
V_t &=& \sqrt{2 \gamma k_B \tau} e^{\gamma (T-t)} \frac{\int_{0}^{t} e^{\gamma (s-T)} dW_s}{\left( 1- e^{-2 \gamma T} \right)^{1/2}}, \\ \label{s3}
\sigma_t &=& \sqrt{2 \gamma k_B \tau} e^{\gamma (T-t)} \frac{e^{\gamma (t-T)}}{\left( 1- e^{-2 \gamma T} \right)^{1/2}}.
\end{eqnarray}
\end{subequations}
Then we finally conclude
\begin{eqnarray}\nonumber
V_t &=& \sqrt{2 \gamma k_B \tau} e^{-\gamma t} \frac{\int_{0}^{t} e^{\gamma s} dW_s}{\left( 1- e^{-2 \gamma T} \right)^{1/2}} \\ \nonumber
&\underset{T \nearrow \infty}{\longrightarrow}& \sqrt{2 \gamma k_B \tau} e^{-\gamma t} \int_{0}^{t} e^{\gamma s} dW_s \qquad \text{almost surely}
\end{eqnarray}
and
\begin{eqnarray}\nonumber
\sigma_t &=& \sqrt{2 \gamma k_B \tau} \frac{1}{\left( 1- e^{-2 \gamma T} \right)^{1/2}} \\ \nonumber
&\underset{T \nearrow \infty}{\longrightarrow}& \sqrt{2 \gamma k_B \tau},
\end{eqnarray}
where the convergence is, obviously, uniform in $t$. So we have recovered the classical results after assuming that, not only the approach
to equilibrium, but also the dynamics once there, are of Ornstein-Uhlenbeck form; also, that the equilibrium is governed by Maxwell-Boltzmann
statistics and happens in the distant future.

\section{Energy, power, and forward stochastic differential equations}
\label{fsdes}

Let $(\Omega,\mathcal{F},\{\mathcal{F}_t\}_{t \ge 0},\mathbb{P})$ be a filtered probability space completed with the $\mathbb{P}-$null sets
in which a Wiener Process $\{B_t\}_{t \ge 0}$ is defined; moreover assume
$\mathcal{F}_t \supset \sigma(\{B_s, 0 \le s \le t\})$.
Now let us consider the circuit equation
\begin{eqnarray}\label{oui}
L dI_t &=& -R I_t dt + V dB_t, \\ \nonumber
\left. I_t \right|_{t=0} &=& I_0,
\end{eqnarray}
for a rigid wire loop, where $L,R,V>0$ are, respectively, the self-inductance, the resistance,
and the amplitude of the thermal fluctuations. The initial condition $I_0 \in L^4(\Omega)$ is a $\mathcal{F}_0-$measurable random variable.
Under these conditions it is clear that this equation has a unique strong solution that is global in time~\cite{kuo,oksendal}.
The approach developed in~\cite{gillespie} consists in solving this
forward stochastic differential equation in terms of the Ornstein-Uhlenbeck process, and then imposing
that the energy of the circuit, which is given by
$$
E_t = \frac12 L I_t^2,
$$
obeys the equipartition theorem in the long time limit. Since, in this case, the fluctuation-dissipation relation
could be seen as a consequence of the interplay of Langevin dynamics and equipartition of energy, it seems natural
to work directly with the energy instead of with the current.
We can use stochastic calculus to find the forward stochastic differential equation that $E_t$ obeys, but at this moment we
have to confront an old dilemma.
On one hand, we can select It\^o calculus~\cite{ito1,ito2} to find
\begin{equation}\label{ito}
d E_t = \frac{V^2}{2L} dt -2 \frac{R}{L} \, E_t \, dt + \sqrt{2 \frac{V^2}{L} \, E_t} \, dB_t;
\end{equation}
on the other hand, if the selection is Stratonovich calculus~\cite{stratonovich} we get
\begin{equation}\label{strat}
d E_t = -2 \frac{R}{L} \, E_t \, dt + \sqrt{2 \frac{V^2}{L} \, E_t} \circ dB_t.
\end{equation}
Actually, this double approach was studied before in the context of the random dispersal of the Brownian particle.
In~\cite{west} it is argued that the Stratonovich approach is superior to the It\^o approach. In~\cite{kampen},
however, the equality of both approaches from the methodological viewpoint is discussed, but the preference for the
Stratonovich one in those physical systems in which the fluctuations are external, such as in the cases studied in the present work,
is concluded. The results of this article were supported in~\cite{mmcc}, where the preference towards the Stratonovich interpretation
in continuous physical systems, again such as the ones studied herein, is mentioned. On the contrary, in~\cite{escudero}, the simpler
character of the It\^o interpretation in precisely this problem is defended. We shall further elaborate those arguments from now on.

Let us start with equation~\eqref{ito}, which admits the explicit solution
\begin{equation}\label{energy}
E_t = \frac12 L \left[ e^{-(R/L) t} \sqrt{\frac{2 E_0}{L}} + \frac{V}{L} \, \int_0^t e^{(R/L)(s-t)} \, dB_s \right]^2,
\end{equation}
a fact that is a direct consequence of the It\^o stochastic calculus rules, indeed
\begin{equation}\nonumber
dE_t= L I_t dI_t + \frac12 L (dI_t)^2,
\end{equation}
where
$$
I_t= e^{-(R/L) t} \sqrt{\frac{2 E_0}{L}} + \frac{V}{L} \, \int_0^t e^{(R/L)(s-t)} \, dB_s
$$
is the solution to equation~\eqref{oui}.
Moreover the solution is unique by the Wanatabe-Yamada theorem~\cite{wy} as was claimed in~\cite{escudero}.
Also, it is easy to see that the expected energy obeys the ordinary differential equation
\begin{equation}\nonumber
\frac{d \mathbb{E}(E_t)}{dt} = \frac{V^2}{2L} -2 \frac{R}{L} \, \mathbb{E}(E_t),
\end{equation}
which can be solved to yield
\begin{eqnarray}\nonumber
\mathbb{E}(E_t) &=& \frac{V^2}{4R} + \left[ \mathbb{E}(E_0) - \frac{V^2}{4R} \right] \exp\left(-2 \frac{R}{L}t\right) \\ \nonumber
&\underset{t \nearrow \infty}{\longrightarrow}& \frac{V^2}{4R}.
\end{eqnarray}
Now we apply the equipartition theorem
$$
\frac{V^2}{4R} =\lim_{t \nearrow \infty} \mathbb{E}(E_t)= \frac12 k_B \tau,
$$
so we conclude
$$
V= \sqrt{2 k_B \tau R},
$$
in perfect agreement with the previous results that were derived using the current rather than the energy~\cite{gillespie}.

On the other hand, the Watanabe-Yamada theorem is not applicable to Stratonovich stochastic differential equations~\cite{ce}.
As a consequence of this, equation~\eqref{strat} possesses an infinite number of solutions apart from~\eqref{energy}~\cite{escudero},
among which let us
first consider the family
\begin{eqnarray}\label{upet}
\overline{E}_t &=& \frac12 L \left[ e^{-(R/L) t} \sqrt{\frac{2 E_0}{L}} + \frac{V}{L} \, \int_0^t e^{(R/L)(s-t)} \, dB_s \right]^2
\, \mathlarger{\mathlarger{\mathbbm{1}}}_{t < T_1(\omega)}
\\ \nonumber & & +
\sum_{n=1}^{N} \frac{V^2}{2L} \left[ \int_{\mu_n+T_n(\omega)}^{t} e^{(R/L)(s-t)} \, dB_s \right]^2 \,
\mathlarger{\mathlarger{\mathbbm{1}}}_{\mu_n + T_n(\omega)<t<T_{n+1}(\omega)}
\\ \nonumber
& & + \frac{V^2}{2L} \left[ \int_{\mu_{N+1}+T_{N+1}(\omega)}^{t} e^{(R/L)(s-t)} \, dB_s \right]^2 \,
\mathlarger{\mathlarger{\mathbbm{1}}}_{t>\mu_{N+1}+T_{N+1}(\omega)},
\end{eqnarray}
for any set $\{\mu_n\}_{n=1}^{N+1}$, $N=1,2,\cdots$, of almost surely non-negative,
$L^0(\Omega)$, and $\mathcal{F}_{T_n(\omega)}-$measurable
random variables, where
$$
T_n := \inf \{ t>T_{n-1} + \mu_{n-1} + \lambda_{n-1} : \overline{E}_t=0 \}, \quad n=1,2,\cdots,
$$
with $T_0:=0=:\mu_0$, and
where $\{\lambda_n\}_{n=0}^N$ is an arbitrary set of almost surely positive,
$L^0(\Omega)$, and $\mathcal{F}_{T_{n}(\omega)}-$measurable random variables.
Note that all of these solutions are new and were not previously reported in~\cite{escudero}.
Now we select a subclass of this family of solutions and analyze its behavior for long times.

\begin{theorem}\label{lemcim}
Let $\overline{E}_t$ be as in~\eqref{upet} and assume $\{\lambda_n\}_{n=0}^N$ and $\{\mu_n\}_{n=1}^{N+1}$ are finite almost surely. Then
$$
\mathbb{E}(\overline{E}_t) \underset{t \nearrow \infty}{\longrightarrow} \frac{V^2}{4R}.
$$
\end{theorem}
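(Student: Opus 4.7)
The plan is to decompose $\mathbb{E}(\overline{E}_t)$ according to the three types of indicators appearing in~\eqref{upet} and to show that, as $t\nearrow\infty$, only the last piece survives and delivers the claimed limit. Introduce the random time $\tau:=T_{N+1}+\mu_{N+1}$. Under the hypotheses, $\mu_{N+1}$ is a.s.\ finite, and each hitting time $T_n$ is a.s.\ finite as well: between successive restarts the Gaussian process driving $\overline{E}_t$ is an Ornstein-Uhlenbeck process started from zero, which is positively recurrent and returns to zero in finite time from every nonzero state. Moreover $\tau$ is a stopping time, since $T_{N+1}$ is a hitting time of the continuous process $\overline{E}$ and $\mu_{N+1}$ is $\mathcal{F}_{T_{N+1}}$-measurable. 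In particular $\mathbb{P}(t\le\tau)\to 0$ as $t\nearrow\infty$.

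The main step would be to evaluate the contribution of the last piece. On $\{t>\tau\}$ we have $\overline{E}_t=\frac{V^2}{2L}\bigl(\int_\tau^{t}e^{(R/L)(s-t)}\,dB_s\bigr)^2$, and the conditional It\^o isometry applied at the stopping time $\tau$ to a deterministic integrand yields
\begin{equation*}
\mathbb{E}\!\left[\left(\int_\tau^t e^{(R/L)(s-t)}\,dB_s\right)^{\!2}\bigg|\mathcal{F}_\tau\right]=\frac{L}{2R}\bigl(1-e^{-2(R/L)(t-\tau)}\bigr)\quad\text{on }\{t>\tau\}.
\end{equation*}
Because $\{t>\tau\}\in\mathcal{F}_\tau$, taking unconditional expectation produces $\mathbb{E}[\overline{E}_t\,\1_{t>\tau}]=\frac{V^2}{4R}\,\mathbb{E}\bigl[(1-e^{-2(R/L)(t-\tau)})\1_{t>\tau}\bigr]$, which converges to $V^2/(4R)$ by dominated convergence (the integrand is bounded by $1$ and tends a.s.\ to $1$).

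What remains is to verify that the earlier pieces contribute negligibly. Set $A_0(t):=\{t<T_1\}$ and $A_n(t):=\{T_n+\mu_n<t<T_{n+1}\}$ for $n=1,\dots,N$. Each $A_n(t)\subset\{t\le\tau\}$, hence $\mathbb{P}(A_n(t))\to 0$. On $A_n(t)$ with $n\ge 1$, $\overline{E}_t$ is a scalar multiple of the square of an It\^o integral with deterministic integrand of $L^2$-norm bounded by $L/(2R)$, so a Gaussian fourth-moment computation gives $\sup_t\mathbb{E}[\overline{E}_t^2\,\1_{A_n(t)}]<\infty$; on $A_0(t)$ the explicit formula~\eqref{energy} combined with $I_0\in L^4(\Omega)$ delivers the same uniform second-moment bound. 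Cauchy-Schwarz then yields
\begin{equation*}
\mathbb{E}[\overline{E}_t\,\1_{A_n(t)}]\le\mathbb{E}[\overline{E}_t^2\,\1_{A_n(t)}]^{1/2}\,\mathbb{P}(A_n(t))^{1/2}\longrightarrow 0,
\end{equation*}
and summing the finitely many pieces with the limit obtained for the last piece delivers $\mathbb{E}(\overline{E}_t)\to V^2/(4R)$.

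The main delicacy is justifying the conditional It\^o isometry at the random time $\tau$; this depends on the nested $\mathcal{F}_{T_n}$-adaptedness of the $\mu_n$, which is what makes $\tau$ a genuine stopping time. The recurrence argument giving a.s.\ finiteness of the $T_n$, and the uniform second-moment bounds on the intermediate pieces, are standard Gaussian-moment computations and should require no new ideas.
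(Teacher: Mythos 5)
Your proposal is correct and follows the same overall strategy as the paper: decompose $\overline{E}_t$ into the transient pieces and the final piece, show the transient pieces contribute nothing to the expectation in the limit, and compute the limit of the final piece via the tower property and the conditional It\^o isometry at the stopping time $\mu_{N+1}+T_{N+1}$ (your $\tau$), concluding by dominated convergence exactly as the paper does. Where you diverge is in how the transient contribution is dispatched. The paper first proves that the transient part converges to zero almost surely (using the a.s.\ finiteness of the $T_i$), then establishes uniform integrability through the H\"older estimate $\mathbb{E}\{X\1_{\mathcal{A}}\}\le\mathbb{E}\{X^2\}^{1/2}\mathbb{P}(\mathcal{A})^{1/2}$ combined with conditional Gaussian fourth-moment bounds, and finally invokes the Vitali convergence theorem. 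You instead observe that each transient piece is supported on an event contained in $\{t\le\tau\}$, whose probability vanishes as $t\nearrow\infty$, and apply Cauchy--Schwarz directly to those shrinking events using the same uniform fourth-moment bounds (with $I_0\in L^4(\Omega)$, hence $E_0\in L^2(\Omega)$, covering the $A_0$ piece, just as in the paper). The two arguments rest on identical estimates, but your packaging is leaner: it makes the separate almost-sure convergence step and the appeal to Vitali unnecessary. The one point you flag as delicate --- the conditional It\^o isometry at the random time $\tau$ --- is handled in the paper in precisely the way you describe, by conditioning on $\mathcal{F}_{\mu_{N+1}+T_{N+1}}$, and your justification that $\tau$ is a genuine stopping time (the $\mu_n$ being $\mathcal{F}_{T_n}$-measurable) matches the hypotheses built into the construction~\eqref{upet}.
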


\begin{proof}
First note that
\begin{eqnarray}\nonumber
&& \overline{E}_t -\frac{V^2}{2L} \left[ \int_{\mu_{N+1}+T_{N+1}(\omega)}^{t} e^{(R/L)(s-t)} \, dB_s \right]^2 \,
\mathlarger{\mathlarger{\mathbbm{1}}}_{t>\mu_{N+1}+T_{N+1}(\omega)}
\\ \nonumber
&=& \frac12 L \left[ e^{-(R/L) t} \sqrt{\frac{2 E_0}{L}} + \frac{V}{L} \, \int_0^t e^{(R/L)(s-t)} \, dB_s \right]^2
\, \mathlarger{\mathlarger{\mathbbm{1}}}_{t < T_1(\omega)}
\\ \nonumber & & +
\sum_{n=1}^{N} \frac{V^2}{2L} \left[ \int_{\mu_n+T_n(\omega)}^{t} e^{(R/L)(s-t)} \, dB_s \right]^2 \,
\mathlarger{\mathlarger{\mathbbm{1}}}_{\mu_n + T_n(\omega)<t<T_{n+1}(\omega)}
\\ \nonumber \\ \nonumber
&& \underset{t \nearrow \infty}{\longrightarrow} 0 \qquad \text{almost surely},
\end{eqnarray}
since all the $T_i$'s, $i=1,2,\cdots$, are finite almost surely~\cite{escudero}.
We also have the estimate
\begin{eqnarray}\nonumber
&& \left| \overline{E}_t - \frac{V^2}{2L} \left[ \int_{\mu_{N+1}+T_{N+1}(\omega)}^{t} e^{(R/L)(s-t)} \, dB_s \right]^2 \,
\mathlarger{\mathlarger{\mathbbm{1}}}_{t>\mu_{N+1}+T_{N+1}(\omega)} \right|
\\ \nonumber
&\le& 2 E_0  e^{-2(R/L) t} + \frac{V^2}{L} \left[ \int_0^t e^{(R/L)(s-t)} \, dB_s \right]^2
\\ \nonumber & & +
\sum_{n=1}^{N} \frac{V^2}{2L} \left[ \int_{\mu_n+T_n(\omega)}^{t} e^{(R/L)(s-t)} \, dB_s \right]^2
\mathlarger{\mathlarger{\mathbbm{1}}}_{t>\mu_n + T_n(\omega)},
\end{eqnarray}
and therefore
\begin{eqnarray}\nonumber
&& \mathbb{E}\left\{ \left| \overline{E}_t - \frac{V^2}{2L} \left[ \int_{\mu_{N+1}+T_{N+1}(\omega)}^{t} e^{(R/L)(s-t)} \, dB_s \right]^2 \,
\mathlarger{\mathlarger{\mathbbm{1}}}_{t>\mu_{N+1}+T_{N+1}(\omega)} \right| \right\}
\\ \nonumber
&\le& 2 \mathbb{E}\{E_0\} e^{-2(R/L) t} + \frac{V^2}{L}
\mathbb{E}\left\{\left[ \int_0^t e^{(R/L)(s-t)} \, dB_s \right]^2 \right\}
\\ \nonumber & & +
\sum_{n=1}^{N} \frac{V^2}{2L} \mathbb{E}\left\{\left[ \int_{\mu_n+T_n(\omega)}^{t} e^{(R/L)(s-t)} \, dB_s
\right]^2 \mathlarger{\mathlarger{\mathbbm{1}}}_{t>\mu_n + T_n(\omega)} \right\}
\\ \nonumber
&\le& 2 \mathbb{E}\{E_0\} e^{-2(R/L) t} + \frac{V^2}{2R} \left( 1-e^{-2Rt/L} \right) +
\sum_{n=1}^{N} \frac{V^2}{4R}
\\ \nonumber
&\le& 2 \mathbb{E}\{E_0\} + \frac{V^2}{2R} \left(1 + \frac{N}{2} \right)
\\ \nonumber &<& \infty,
\end{eqnarray}
where the last bound is of course uniform in $t$, and where we have used the It\^o isometry along with the string of equalities
and final inequality
\begin{eqnarray}\nonumber
&& \mathbb{E}\left\{\left[ \int_{\mu_n+T_n(\omega)}^{t} e^{(R/L)(s-t)} \, dB_s \right]^2
\mathlarger{\mathlarger{\mathbbm{1}}}_{t>\mu_n + T_n(\omega)} \right\}
\\ \nonumber
&=& \mathbb{E}\left\{\mathbb{E}\left\{\left[ \int_{\mu_n+T_n(\omega)}^{t} e^{(R/L)(s-t)} \, dB_s
\right]^2\Bigg|\mathcal{F}_{\mu_n+T_n(\omega)}\right\} \mathlarger{\mathlarger{\mathbbm{1}}}_{t>\mu_n + T_n(\omega)} \right\}
\\ \nonumber
&=& \mathbb{E}\left\{\mathbb{E}\left\{\int_{\mu_n+T_n(\omega)}^{t} e^{2(R/L)(s-t)} \, ds\Bigg|\mathcal{F}_{\mu_n+T_n(\omega)}\right\}
\mathlarger{\mathlarger{\mathbbm{1}}}_{t>\mu_n + T_n(\omega)} \right\}
\\ \nonumber
&=& \frac{L}{2R} \, \mathbb{E}\left\{ \left(1-\exp\left[ \frac{2R}{L}\left(\mu_n + T_n(\omega)-t\right) \right] \right)
\mathlarger{\mathlarger{\mathbbm{1}}}_{t>\mu_n + T_n(\omega)} \right\}
\\ \nonumber
&\le& \frac{L}{2R},
\end{eqnarray}
which follow from the tower property of conditional expectation
and the It\^o isometry. Now consider $\mathcal{A} \in \mathcal{F}$ with $\mathbb{P}\{\mathcal{A}\}=\delta$.
We have the estimate
\begin{eqnarray}\nonumber
&& \mathbb{E}\left\{ \left| \overline{E}_t - \frac{V^2}{2L} \left[ \int_{\mu_{N+1}+T_{N+1}(\omega)}^{t} e^{(R/L)(s-t)} \, dB_s \right]^2 \,
\mathlarger{\mathlarger{\mathbbm{1}}}_{t>\mu_{N+1}+T_{N+1}(\omega)} \right| \mathlarger{\mathlarger{\mathbbm{1}}}_{\mathcal{A}} \right\}
\\ \nonumber
&\le& 2 \mathbb{E}\left\{E_0 \mathlarger{\mathlarger{\mathbbm{1}}}_{\mathcal{A}}\right\} e^{-2(R/L) t}
+ \frac{V^2}{L} \mathbb{E}\left\{\left[ \int_0^t e^{(R/L)(s-t)} \, dB_s \right]^2 \mathlarger{\mathlarger{\mathbbm{1}}}_{\mathcal{A}} \right\}
\\ \nonumber & & +
\sum_{n=1}^{N} \frac{V^2}{2L} \mathbb{E}\left\{\left[ \int_{\mu_n+T_n(\omega)}^{t} e^{(R/L)(s-t)} \, dB_s \right]^2
\mathlarger{\mathlarger{\mathbbm{1}}}_{t>\mu_n + T_n(\omega)} \, \mathlarger{\mathlarger{\mathbbm{1}}}_{\mathcal{A}} \right\}
\\ \nonumber
&\le& 2 \mathbb{E}\left\{E_0^2\right\}^{1/2} \mathbb{P}(\mathcal{A})^{1/2}
+ \frac{V^2}{L} \mathbb{E}\left\{\left[ \int_0^t e^{(R/L)(s-t)} \, dB_s \right]^4 \right\}^{1/2}
\mathbb{P}(\mathcal{A})^{1/2}
\\ \nonumber & & +
\sum_{n=1}^{N} \frac{V^2}{2L} \mathbb{E}\left\{\left[ \int_{\mu_n+T_n(\omega)}^{t} e^{(R/L)(s-t)} \, dB_s \right]^4
\mathlarger{\mathlarger{\mathbbm{1}}}_{t>\mu_n + T_n(\omega)} \right\}\mathbb{P}(\mathcal{A})^{1/2}
\\ \nonumber
&\le& 2 \mathbb{E}\left\{E_0^2\right\}^{1/2} \delta^{1/2}
+ \frac{\sqrt{3}V^2}{2R} \delta^{1/2}
\\ \nonumber & & +
\sum_{n=1}^{N} \frac{V^2}{2L}
\mathbb{E}\left\{\mathbb{E}\left\{\left[ \int_{\mu_n+T_n(\omega)}^{t} e^{(R/L)(s-t)} \, dB_s \right]^4\Bigg|\mathcal{F}_{\mu_n+T_n(\omega)}
\right\} \mathlarger{\mathlarger{\mathbbm{1}}}_{t>\mu_n + T_n(\omega)} \right\}\delta^{1/2}
\\ \nonumber
&\le& 2 \mathbb{E}\left\{E_0^2\right\}^{1/2} \delta^{1/2}
+ \frac{\sqrt{3}V^2}{2R} \delta^{1/2} +
\sum_{n=1}^{N} \frac{\sqrt{3} V^2}{4R} \delta^{1/2}
\\ \nonumber
&=& \left[ 2 \mathbb{E}\left\{E_0^2\right\}^{1/2}
+ \frac{\sqrt{3}V^2}{2R} \left( 1 + \frac{N}{2} \right) \right] \delta^{1/2},
\end{eqnarray}
which follows from the H\"older inequality
\begin{eqnarray}\nonumber
\mathbb{E}\left\{E_0 \mathlarger{\mathlarger{\mathbbm{1}}}_{\mathcal{A}}\right\}
&\le& \mathbb{E}\left\{E_0^2\right\}^{1/2} \mathbb{E}\left\{\mathlarger{\mathlarger{\mathbbm{1}}}^2_{\mathcal{A}}\right\}^{1/2} \\ \nonumber
&=& \mathbb{E}\left\{E_0^2\right\}^{1/2} \mathbb{E}\left\{\mathlarger{\mathlarger{\mathbbm{1}}}_{\mathcal{A}}\right\}^{1/2} \\ \nonumber
&=& \mathbb{E}\left\{E_0^2\right\}^{1/2} \mathbb{P}(\mathcal{A})^{1/2},
\end{eqnarray}
and analogous H\"older inequalities for the other terms, the tower property, and the inequalities
\begin{eqnarray}\nonumber
\mathbb{E}\left\{\left[ \int_0^t e^{(R/L)(s-t)} \, dB_s \right]^4 \right\} &=& 3\frac{L^2}{4R^2} \left( 1-e^{-2Rt/L} \right)^2
\\ \nonumber
&\le& \frac{3L^2}{4R^2}
\end{eqnarray}
and
\begin{eqnarray}\nonumber
&& \mathbb{E}\left\{\left[ \int_{\mu_n+T_n(\omega)}^{t} e^{(R/L)(s-t)} \, dB_s \right]^4 \Bigg|\mathcal{F}_{\mu_n+T_n(\omega)}
\right\} \mathlarger{\mathlarger{\mathbbm{1}}}_{t>\mu_n + T_n(\omega)}
\\ \nonumber
&=& 3\frac{L^2}{4R^2} \left\{ 1-\exp\left[ \frac{2R}{L}\left(\mu_n + T_n(\omega)-t\right) \right] \right\}^2
\mathlarger{\mathlarger{\mathbbm{1}}}_{t>\mu_n + T_n(\omega)}
\\ \nonumber
&\le& \frac{3L^2}{4R^2},
\end{eqnarray}
since respectively
$$
\int_0^t e^{(R/L)(s-t)} \, dB_s \sim \mathcal{N}\left(0,\frac{L}{2R} \left( 1-e^{-2Rt/L} \right)\right)
$$
and
$$
\int_{\mu_n+T_n(\omega)}^{t} e^{(R/L)(s-t)} \, dB_s \Bigg|\mathcal{F}_{\mu_n+T_n(\omega)} \sim
\mathcal{N}\left(0,\frac{L}{2R} \left\{ 1-\exp\left[ \frac{2R}{L}\left(\mu_n + T_n(\omega)-t\right) \right] \right\}\right).
$$
Then consequently
\begin{equation}\nonumber
\mathbb{E}\left\{ \left| \overline{E}_t - \frac{V^2}{2L} \left[ \int_{\mu_{N+1}+T_{N+1}(\omega)}^{t} e^{(R/L)(s-t)} \, dB_s \right]^2 \,
\mathlarger{\mathlarger{\mathbbm{1}}}_{t>\mu_{N+1}+T_{N+1}(\omega)} \right| \mathlarger{\mathlarger{\mathbbm{1}}}_{\mathcal{A}} \right\}
\le \varepsilon
\end{equation}
for all $\varepsilon >0$, after choosing
$$
\delta = \frac{\varepsilon^2}{\left[ 2 \mathbb{E}\left\{E_0^2\right\}^{1/2}
+ \dfrac{\sqrt{3}V^2}{2R} \left( 1 + \dfrac{N}{2} \right) \right]^2},
$$
and by the Vitali convergence theorem~\cite{folland}
\begin{eqnarray}\nonumber
&& \lim_{t \nearrow \infty} \mathbb{E}\left\{ \left| \overline{E}_t - \frac{V^2}{2L} \left[ \int_{\mu_{N+1}+ T_{N+1}(\omega)}^{t}
e^{(R/L)(s-t)} \, dB_s \right]^2 \,
\mathlarger{\mathlarger{\mathbbm{1}}}_{t>\mu_{N+1}+ T_{N+1}(\omega)} \right| \right\} \\ \nonumber
&=& \mathbb{E}\left\{ \lim_{t \nearrow \infty} \left| \overline{E}_t - \frac{V^2}{2L} \left[
\int_{\mu_{N+1}+ T_{N+1}(\omega)}^{t} e^{(R/L)(s-t)} \, dB_s \right]^2 \,
\mathlarger{\mathlarger{\mathbbm{1}}}_{t>\mu_{N+1}+ T_{N+1}(\omega)} \right| \right\} \\ \nonumber
&=& 0.
\end{eqnarray}
Therefore
\begin{eqnarray}\nonumber
&& \lim_{t \nearrow \infty} \mathbb{E}\left\{ \overline{E}_t \right\} \\ \nonumber
&=& \lim_{t \nearrow \infty}
\mathbb{E} \left\{ \frac{V^2}{2L} \left[ \int_{\mu_{N+1}+ T_{N+1}(\omega)}^{t} e^{(R/L)(s-t)} \, dB_s \right]^2 \,
\mathlarger{\mathlarger{\mathbbm{1}}}_{t>\mu_{N+1}+ T_{N+1}(\omega)} \right\} \\ \nonumber
&=& \lim_{t \nearrow \infty}
\mathbb{E} \left\{ \mathbb{E} \left\{ \frac{V^2}{2L} \left[ \int_{\mu_{N+1}+ T_{N+1}(\omega)}^{t} e^{(R/L)(s-t)} \, dB_s \right]^2 \,
\mathlarger{\mathlarger{\mathbbm{1}}}_{t>\mu_{N+1}+ T_{N+1}(\omega)} \Bigg| \mathcal{F}_{\mu_{N+1}+ T_{N+1}} \right\}\right\} \\ \nonumber
&=& \lim_{t \nearrow \infty}
\mathbb{E} \left\{ \mathbb{E} \left\{ \frac{V^2}{2L} \int_{\mu_{N+1}+ T_{N+1}(\omega)}^{t} e^{2(R/L)(s-t)} \, ds \,
\Bigg| \mathcal{F}_{\mu_{N+1}+ T_{N+1}} \right\}
\mathlarger{\mathlarger{\mathbbm{1}}}_{t>\mu_{N+1}+ T_{N+1}(\omega)}\right\} \\ \nonumber
&=& \lim_{t \nearrow \infty}
\mathbb{E} \left\{ \frac{V^2}{4R} \left\{ 1-\exp\left[ \frac{2R}{L}\left(\mu_n + T_n(\omega)-t\right) \right] \right\}
\mathlarger{\mathlarger{\mathbbm{1}}}_{t>\mu_{N+1}+ T_{N+1}(\omega)}\right\} \\ \nonumber
&=& \frac{V^2}{4R} \,
\mathbb{E} \left\{ \lim_{t \nearrow \infty} \left\{ 1-\exp\left[ \frac{2R}{L}\left(\mu_n + T_n(\omega)-t\right) \right] \right\}
\mathlarger{\mathlarger{\mathbbm{1}}}_{t>\mu_{N+1}+ T_{N+1}(\omega)}\right\} \\ \nonumber
&=& \frac{V^2}{4R},
\end{eqnarray}
by the tower property, the It\^o isometry, and the dominated convergence theorem.
\end{proof}
Note that all of these solutions are not physical, as they imply a zero energy, and therefore zero current, during some intervals
of time, despite the presence of non-vanishing thermal fluctuations.
But, in spite of their unphysical character, their long time behavior is still compatible with the equipartition theorem provided we assume
$V= \sqrt{2 k_B \tau R}$, which is nothing but the fluctuation-dissipation relation.

Let us now move to a second family of solutions to equation~\eqref{strat}, in particular to the one given by the explicit formula
\begin{eqnarray}\nonumber
\underline{E}_t &=& \frac12 L \left[ e^{-(R/L) t} \sqrt{\frac{2 E_0}{L}} + \frac{V}{L} \, \int_0^t e^{(R/L)(s-t)} \, dB_s \right]^2
\, \mathlarger{\mathlarger{\mathbbm{1}}}_{t < T_1'(\omega)}
\\ \nonumber & & +
\sum_{n=1}^{N} \frac{V^2}{2L} \left[ \int_{\mu_n' + T_n'(\omega)}^{t} e^{(R/L)(s-t)} \, dB_s \right]^2 \,
\mathlarger{\mathlarger{\mathbbm{1}}}_{\mu_n' + T_n'(\omega)<t<T_{n+1}'(\omega)},
\end{eqnarray}
for any set $\{\mu_n'\}_{n=1}^N$, $N \in \mathbb{N} \cup \{\infty\}$, of almost surely non-negative,
$L^0(\Omega)$, and $\mathcal{F}_{T_n'(\omega)}-$measurable
random variables, where
$$
T_n' := \inf \{ t>T_{n-1}' + \mu_{n-1}' + \lambda_{n-1}' : \underline{E}_t=0 \}, \quad n=1,2,\cdots,
$$
with $T_0':=0=:\mu_0'$, and
where $\{\lambda_n'\}_{n=0}^N$ is an arbitrary set of almost surely positive,
$L^0(\Omega)$, and $\mathcal{F}_{T_{n}'(\omega)}-$measurable random variables.
Note that some of these solutions are also new and were not reported in~\cite{escudero} (precisely, all the cases with $E_0=0$),
but they have been built using the same philosophy.

To carry out the long time analysis assume moreover that
$N < \infty$ and $\{\lambda_n'\}_{n=0}^N$ and $\{\mu_n'\}_{n=1}^{N+1}$ are finite almost surely.
Because every stopping time $T_i'$, with $i=1,2,\cdots$, is finite almost surely~\cite{escudero},
we deduce the long time behavior
$$
\underline{E}_t \underset{t \nearrow \infty}{\longrightarrow} 0 \qquad \text{almost surely}
$$
for whatever absolute temperature $\tau$; consequently these solutions lead to zero energy, and consequently zero current,
in the long time with probability one, despite of the presence of non-vanishing thermal fluctuations.
Thus we conclude that these solutions are unphysical, and also that both the equipartition theorem and
fluctuation-dissipation relation are meaningless for them.

To finish this section we would like to emphasize that these results are not a consequence of strictly considering the energy of the circuit,
but could also be reached from the dynamics of other physical quantities.
The power dissipated the circuit is given by
$$
D_t = R I_t^2
$$
and, as we did before for the energy, we can use stochastic calculus to find the forward stochastic differential equation it obeys.
If we select It\^o calculus we find
\begin{equation}\nonumber
d D_t = \frac{R V^2}{L^2} dt -2 \frac{R}{L} \, D_t \, dt + 2 \sqrt{\frac{R V^2}{L^2} \, D_t} \, dB_t;
\end{equation}
while if the selection is Stratonovich calculus we get
\begin{equation}\nonumber
d D_t = -2 \frac{R}{L} \, D_t \, dt + 2 \sqrt{\frac{R V^2}{L^2} \, D_t} \circ dB_t.
\end{equation}
On the other hand, the fluctuation-dissipation relation implies
\begin{eqnarray}\nonumber
\lim_{t \nearrow \infty} \mathbb{E}(D_t) &=& \lim_{t \nearrow \infty} \frac{2R}{L} \mathbb{E}(E_t) \\ \nonumber
&=& \frac{k_B \tau R}{L},
\end{eqnarray}
a result that is achievable from the unique solution of the It\^o equation, just as we did before in the case of the energy.
Also note that arguing as in Theorem~\ref{lemcim} we can find infinitely many solutions to the Stratonovich forward stochastic differential equation that fulfil the convergence
$$
\mathbb{E}(\overline{D}_t) \underset{t \nearrow \infty}{\longrightarrow} \frac{R V^2}{L^2},
$$
and therefore the equipartition theorem whenever the fluctuation-dissipation relation $V=\sqrt{2 k_B \tau R}$ holds.
But we can find as well infinitely many solutions that fulfil
$$
\underline{D}_t \underset{t \nearrow \infty}{\longrightarrow} 0 \qquad \text{almost surely}
$$
for any triplet of positive parameters $\{L,V,R\}$, and for any absolute temperature $\tau$,
and therefore neither the equipartition theorem nor the fluctuation-dissipation relation make sense for them.

\section{Ideal circuit and fluctuating thermal amplitude}
\label{ideal}

The aim of this section is to show how the situation in the previous one is exactly the same as the most
basic formulation of the It\^o vs Stratonovich dilemma, that is, selecting the precise meaning of
a formal basic stochastic integral.
For this we consider a formal Gaussian white noise process $\xi_t$ and an associated formal circuit equation
\begin{eqnarray}\label{bbt}
L \frac{dI_t}{dt} &=& V B_t \xi_t, \\ \nonumber
\left. I_t \right|_{t=0} &=& I_0,
\end{eqnarray}
which models a fluctuating amplitude of the thermal fluctuations in an ideal circuit with no resistance.
If we consider the white noise process to be the formal derivative of the Brownian motion, i.~e.
\begin{equation}\nonumber
\xi_t= \frac{\eth B_t}{dt},
\end{equation}
we can rewrite equation~\eqref{bbt} as
\begin{eqnarray}\nonumber
L dI_t &=& V B_t \eth B_t, \\ \nonumber
\left. I_t \right|_{t=0} &=& I_0,
\end{eqnarray}
where $\eth$ denotes a formal stochastic integration scheme. This equation can be formally solved to find
$$
I_t = I_0 + \frac{L}{V} \int_{0}^{t} B_s \eth B_s,
$$
where the last integral is still to be defined.
If we defined this integral in the sense of Riemann-Stieltjes we would find
\begin{equation}\nonumber
\int_{0}^{t} B_s \eth B_s = \mathbb{P}-\lim_{\left| \Pi_n \right| \to 0} \sum_{i=1}^n B_{t_{i-1}^*} \left(B_{t_i}-B_{t_{i-1}} \right),
\end{equation}
where $\Pi = \lbrace 0 =: t_0, t_1, t_2, ..., t_n := t \rbrace $ is a partition of the interval $[0,t]$, $\left| \Pi_n \right| = \max _{1 \leq i \leq n} \left(t_i - t_{i-1}\right)$, and $t_i^* \in [t_{i-1},t_i]$ is given by the convex linear combination $t_i^* = \alpha t_{i} + (1-\alpha) t_{i-1}$
with $\alpha \in [0,1]$ arbitrary. Of course, if the integral existed in the Riemann-Stieltjes sense the result would be independent of $\alpha$.
However, it is well known that the result is $\alpha-$dependent and reads~\cite{evans}
\begin{equation}\nonumber
\int_{0}^{t} B_s \eth B_s = \frac{B_t^2}{2} + \left( \alpha -\frac12 \right) t.
\end{equation}
Therefore the solution to equation~\eqref{bbt} is
$$
I_t = I_0 + \frac{L}{V} \left[ \frac{B_t^2}{2} + \left( \alpha -\frac12 \right) t \right], \qquad \alpha \in [0,1];
$$
consequently it is a multivalued stochastic process. However, causality in physics imposes the solution to be an single-valued
stochastic process, and thus, a unique process has to be selected from this one-parameter family of solutions. Of course,
this selection has to be done solely on physical grounds. Since the nature of the fluctuations is completely random, the induced
current should be isotropic on average, and therefore we should have $\mathbb{E}(I_t)=0$ for all $t \ge 0$. This imposes $\alpha=0$ or,
in other words, the It\^o interpretation of noise.

This is a very basic formulation of the It\^o vs Stratonovich dilemma. According to van Kampen, \eqref{bbt} is not actually an equation
but a \emph{pre-equation}, and it only becomes an actual equation when a noise interpretation is given~\cite{kampen}. However, the only
fundamental difficulty associated to~\eqref{bbt} is its infinite multiplicity of solutions. Adding an interpretation of noise is equivalent to
choosing one solution out of this set; something that can only be done using physical arguments. The same reasoning is applicable to the majority
of the It\^o vs Stratonovich dilemmas one finds in the physical literature.
The situation is, in essence, identical to that of equation~\eqref{strat}: it admits infinitely many solutions and only one has to be chosen, with
this selection strictly based on physical grounds. In other words, and using the terminology of van Kampen, adding an interpretation of noise
does not necessarily transform a \emph{pre-equation} into an equation. This could be so in many situations, but there are known counterexamples both
in the Stratonovich case (such as in section~\ref{fsdes}, \cite{ce}, and~\cite{escudero}) and in the It\^o one (see for instance~\cite{ander}).
Finally, it is convenient to emphasize that there is a right unique solution to problems~\eqref{strat} and~\eqref{bbt} only if they are
intended to describe a physical phenomenon and not simply considered as abstract mathematical models. Equivalently, there is nothing mathematically
wrong about their respective infinite solution sets: all these solutions are equally admissible. Moreover, if one changed the physical significances
of these models, the right physical solutions would in general differ from those highlighted herein.

\section{Conclusions}
\label{conclusions}

In this work we have examined the classical fluctuation-dissipation relation under the light of stochastic analytical
methods that have not been employed before, to the best of our knowledge, in this problem. We have started considering
the random dispersal of the Brownian particle subjected to a thermal bath. Instead studying the Langevin equation
posed forward in time, a classical theoretical approach to this problem, we have considered this equation posed backwards
in time. This is a natural approach to the problem, since as the fluctuation-dissipation relation arises as a consequence of
Maxwell-Boltzmann statistics in the long time limit, we can impose this statistics to the terminal condition. Moreover,
since a single backward stochastic differential equation has to be solved for two stochastic processes in order to keep the
adaptability of the solution, this enables us to solve this equation simultaneously for the velocity of the Brownian particle
and the strength of the thermal fluctuations, so any relation among them can be studied in detail. Our results indicate that
the classical fluctuation-dissipation relation is only recovered after assuming Ornstein-Uhlenbeck-type dynamics at equilibrium
and the occurrence of the terminal time in the distant future. Under these assumptions we found in section~\ref{bsdes} that
\begin{eqnarray}\nonumber
V_t &=& \sqrt{2 \gamma k_B \tau} e^{-\gamma t} \int_{0}^{t} e^{\gamma s} dW_s, \\ \nonumber
\sigma_t &=& \sqrt{2 \gamma k_B \tau},
\end{eqnarray}
which is what one would classically expect. Note however that we have also obtained the additional pairs of results \eqref{v1}--\eqref{s1},
\eqref{v2}--\eqref{s2}, and \eqref{v3}--\eqref{s3}, which generalize this one when the second or both of these two assumptions
are removed. To the best of our knowledge these pairs of results are new, and in our opinion illustrate the role of these assumptions in a more
transparent way than other approaches. We regard them as some of the advantages of the use of backward rather
than forward stochastic differential equations in this problem.

Subsequently we have studied the fluctuation-dissipation relation in the context of Johnson noise in electric circuits. In this case,
instead of relying on the more traditional approach of studying the Langevin equation for the electric current, we have derived
forward stochastic differential equations for the energy of the circuit. If such an equation is derived using It\^o calculus,
this leads to a unique solution that perfectly reproduces the classical results. If alternatively Stratonovich calculus is used,
we end up with infinitely many solutions, of which only one is physical, precisely the solution derived using It\^o calculus.
Despite of the spurious character of these solutions, some them (in fact, infinitely many) still obey equipartition of energy and
the fluctuation-dissipation relation in the long time limit.
While for others (in fact, infinitely many too), both of these classical results are meaningless. The same conclusions arise if
instead of the energy we focus on the power dissipated by the circuit.

Our present results are important in the light of the established consensus regarding the interpretation of noise.
In~\cite{rs} one finds this consensus summarized as ``computers are It\^o and circuits are Stratonovich'' in the field
of electric engineering. However, we have found that circuits can be It\^o too. A similar conclusion is reached in~\cite{smmcc}
where the authors claim that ``the Stratonovich results, however, accurately describe what actually happens in nature'' in contrast
to the It\^o ones. But however, sometimes the unique solution to an It\^o equation could be of physical nature, while the Stratonovich
equation possesses an infinite number of unphysical solutions. While in~\cite{moon} we can read ``in most areas of physical science (...)
Stratonovich calculus is preferred, mainly due to the consistency with the results emerging from the Fokker-Planck equation and the
ﬂuctuation-dissipation theorem''; nevertheless we have found solutions to a Stratonovich stochastic differential equation which
are totally inconsistent with the fluctuation-dissipation relation, unlike the It\^o solution. In summary, we agree with the conclusions
in~\cite{moon} in that the It\^o vs Stratonovich dilemma should be resolved on a case by case basis.
Thus, on different problems from the one studied herein, one should {\it a priori} consider the two classical interpretations of noise
along with other possible meanings associated to a stochastic differential equation model~\cite{yuanao}.
Our main conclusion from this section
is that general guidelines can be useful at times for the noise interpretation dilemma, but one should be aware of possible counterexamples too.
Moreover, the connection between these results and the most basic version of the It\^o vs Stratonovich dilemma was highlighted in section~\ref{ideal},
which shows that both essentially face the same question.

The problem of the interpretation of noise has some reflection also in the second section. A backward stochastic differential equation
cannot be interpreted in the sense of It\^o, as this would lead to the lack of adaptability, and therefore to the non-existence, of the solution.
The interpretation in the sense of Pardoux and Peng~\cite{pp} guarantees the adaptability of the solution, what is a physical requisite in
the problems at hand. However, at least mathematically speaking, such an equation is well posed,
even if its solution is not adapted, if interpreted according to some anticipating
stochastic integral. The problem of the interpretation of noise in the anticipating setting is summarized in~\cite{be,escudero2,erc}.
We leave as an open question the application of such techniques in systems in which the fluctuation-dissipation relation is of interest.

\section*{Acknowledgements}

This work has been partially supported by the Government of Spain (Ministerio de Ciencia, Innovaci\'on y Universidades)
through Project PGC2018-097704-B-I00.

\vskip5mm
\noindent
{\footnotesize
Carlos Escudero\par\noindent
Departamento de Matem\'aticas Fundamentales\par\noindent
Universidad Nacional de Educaci\'on a Distancia\par\noindent
{\tt cescudero@mat.uned.es}\par\vskip1mm\noindent
}
\end{document}